\documentclass[11pt]{amsart}
\usepackage{amsfonts}
\usepackage{amsmath}
\usepackage{amsthm}
\usepackage{amssymb}
\usepackage{graphicx}
\usepackage{enumerate}
\usepackage{multicol}
\usepackage{mathrsfs}
\usepackage[usenames,dvipsnames]{pstricks}
\usepackage{epsfig}
\usepackage{pst-grad} 
\usepackage{pst-plot} 

\usepackage[utf8]{inputenc}

\newtheorem{prop}{Proposition}
\newtheorem{theorem}{Theorem}

\DeclareMathOperator{\Id}{Id}
\DeclareMathOperator{\n}{\mathfrak{n}}
\DeclareMathOperator{\vv}{\mathfrak{v}}
\DeclareMathOperator{\z}{\mathfrak{z}}

\DeclareMathOperator{\End}{End}

\DeclareMathOperator{\ra}{\rangle}
\DeclareMathOperator{\la}{\langle}


\title{The Sub-Riemannian cut locus of $H$-type groups}
\author[Christian Autenried, Mauricio Godoy M.]{Christian Autenried\\Mauricio Godoy Molina}

\address{Department of Mathematics, University of Bergen, Norway.}
\email{christian.autenried@math.uib.no}

\address{Department of Mathematics, University of Bergen, Norway.}
\email{mauricio.godoy@math.uib.no}

\thanks{The authors are partially supported by the grants of the Norwegian Research Council \#204726/V30, \#213440/BG and NFR-FRINAT.}

\subjclass[2000]{53C17, 53C22, 58E10}

\keywords{Sub-Riemannian geodesics, $H$-type group, cut locus}

\begin{document}

\maketitle

\begin{abstract}
In the present paper we give a proof of the fact that the sub-Riemannian cut locus of a wide class of nilpotent groups of step two, called $H$-type groups, starting from the origin corresponds to the center of the group. We obtain this result by completely describing the sub-Riemannian geodesics in the group, and using these to obtain three disjoint sets of points in the group determined by the number of geodesics joining them to the origin.
\end{abstract}

\section{Introduction}

The $H$(eisenberg)-type Lie algebras, which are one of the most important examples of nilpotent Lie algebras of step 2, were introduced by A. Kaplan in his foundational work~\cite{Ka1}. Their Lie algebra structure is intimately related to the existence of certain Clifford algebra representations, which we will introduce carefully later on. These Lie algebras have a deep connection to sub-Riemannian geometry, which we will proceed to explain.

Let us first recall generalities about sub-Riemannian manifolds. A triplet $(Q,{\mathcal H},\langle\cdot \,,\cdot\rangle)$, where ${\mathcal H}\hookrightarrow TM$ is a distribution, i.e., a subbundle of the tangent bundle of $M$, and $\langle\cdot \,,\cdot\rangle$ is a fiber inner product defined on ${\mathcal H}$, is called a sub-Riemannian manifold. For most applications, it is assumed that the distribution ${\mathcal H}$ is bracket-generating, that is,
\[
{\rm Lie}\,{\mathcal H}=\mbox{ Lie algebra generated by sections of }{\mathcal H}=\Gamma(TM).
\]
The step of ${\mathcal H}$ is, by convention, the minimal length of brackets needed to generate all the vector fields on $M$ plus one. An absolutely continuous curve $\gamma\colon[0,1]\to Q$ is called horizontal if $\dot\gamma(t)\in{\mathcal H}_{\gamma(t)}$ almost everywhere. For horizontal curves, we can define their length as usual
\begin{equation}\label{eq:length}
L(\gamma)=\int_0^1 \sqrt{\langle\dot\gamma(t),\dot\gamma(t)\rangle} dt.
\end{equation}
A horizontal curve $\tilde\gamma\colon[0,1]\to Q$ is a sub-Riemannian geodesic if it locally minimizes the functional~\eqref{eq:length}, i.e., there exists a partition $0=t_0<t_1<\cdots<t_n=1$ of $[0,1]$ such that for $i=0,\dotsc,n-1$
\[
L(\tilde\gamma|_{[t_i,t_{i+1}]})=\inf\{L(\gamma)\colon\gamma\mbox{ is horizontal,}\gamma(0)=\tilde\gamma(t_i),\gamma(1)=\tilde\gamma(t_{i+1})\}.
\]

It is well-known that for a nilpotent algebra ${\mathfrak n}={\mathfrak v}\oplus{\mathfrak z}$ there is a unique (up to isomorphism) connected and simply connected Lie group $N$ with Lie algebra ${\mathfrak n}$. Applying this idea to an $H$-type algebra, and by left-translating the subspace ${\mathfrak v}$ of ${\mathfrak n}$, we obtain a bracket-generating distribution ${\mathcal H}\hookrightarrow TN$ of step $2$. Any inner product defined on ${\mathfrak v}$ induces a sub-Riemannian metric on ${\mathcal H}$. Explicit equations for the sub-Riemannian geodesics in $H$-type groups can be found in~\cite{GKM}.

A fundamental tool in the analysis of sub-Riemannian manifolds is the so-called cut locus. Recall that the (sub-Riemannian) cut locus of $(Q,{\mathcal H},\langle\cdot \,,\cdot\rangle)$, with respect to a point $p\in Q$ is defined as the set of points $q\in Q$ such that there is more than one minimizing sub-Riemannian geodesic connecting $p$ to $q$. One of the most relevant applications where knowledge of this set plays an important part, is to describe the short-time asymptotic behavior of the heat kernel associated to a naturally defined sub-elliptic operator, see~\cite{BBN}. The aim of this paper is to give a complete characterization of the sub-Riemannian cut locus for the $H$-type groups.

This paper is organized as follows. In Section~\ref{sec:generalities}, we briefly recall some necessary definitions and give a precise form of the sub-Riemannian geodesics on an $H$-type group. In Section~\ref{section:sRcutlocus}, we prove the main result of this paper, which follows from studying carefully three different sets of points in the group. In order to understand these sets completely, we need to obtain results similar to those in~\cite{CCM}, but valid in the full generality of $H$-type groups.

\section{Sub-Riemannian geodesics on $H$-type groups}\label{sec:generalities}

\subsection{Sub-Riemannian $H$-type Lie groups}

Let us first recall the construction of the $H$-type Lie algebras $\n_{r}$. In what follows, all inner products are positive definite and $Cl_r$ is the Clifford algebra generated by the vector space $\z_{r}={\mathbb R}^r$ with inner product $\la \cdot \,, \cdot \ra_{\z_{r}}$.

Consider a $Cl_r$-module $\vv_{r}$, where $J\colon \z_{r} \to\End(\vv_{r})$ is the corresponding Clifford algebra representation. If $\la \cdot \,, \cdot \ra_{\vv_{r}}$ is an inner product on $\vv_{r}$, we can define a Lie bracket by
\begin{equation*}
 \langle J_Zv , w \rangle_{\vv_{r}} = \langle Z , [v, w]\rangle_{\z_{r}},\quad \text{for all }v,w \in \vv_{r}.
\end{equation*}
We set all brackets with elements in ${\z_{r}}$ to be zero. This induces a Lie algebra structure of step 2  on $\n_{r}={\vv_{r}}\oplus\z_{r}$. We define the inner product $\la \cdot \,, \cdot \ra:= \la \cdot \,, \cdot \ra_{\vv_{r}}+\la \cdot \,, \cdot \ra_{\z_{r}}$ on $\n_{r}$. Then 
\[
J_{Z'}J_Z+J_ZJ_{Z'}=-2\langle Z , Z' \rangle_{\z_r} \Id_{\vv_{r}} \mbox{ for all }Z \in \z_{r}.
\]
As it is usual in the literature, we call $\vv_r$ the horizontal space and the center $\z_r$ the vertical space.

Let  $\{v_1, \dotso ,v_m\}$ and $\{Z_1,\dotso,Z_n\}$ be orthonormal bases of $\vv_r$ and $\z_r$ respectively. The structure constants $C^k_{i j}$ and the coefficients $B^k_{i j}$ of the representation $J$ are defined by
\begin{equation*}
[v_{i} \,, v_{j} ] = \sum_{k=1}^n{C^k_{i j}Z_k} \qquad \text{and} \qquad J_{Z_k}v_{i}=\sum_{j=1}^m{B^k_{i j} v_{j}}.
\end{equation*}
It is easy to see that $B^{k}_{i j} =C^k_{i j}$, and we will use this fact freely throughout many of the forthcoming computations.

It follows that the structure matrices  $\{C^1, \dotso,C^n\} \subset \mathfrak{so}(m)$ defined by $C^k=(C_{ij}^k)_{ij}$ for all $k=1, \dotso,n$ satisfy the relations
$$C^kC^p=-C^pC^k, \qquad \text{ for } k \not=p,$$
and $(C^k)^2=-\Id_{\vv_r}$. Furthermore, we note that $C^kC^p \in \mathfrak{so}(m)$ for $k\not=p$, since
$$(C^kC^p)^T=(C^p)^T(C^k)^T=(-C^p)(-C^k)=C^pC^k=-C^kC^p,$$
where $(C^k)^T$ is the transposed matrix of $C^k$.

The $H$-type group $N_r$ is the unique (up to homomorphism) connected and simply connected Lie group with Lie algebra $\n_r$. The subspace $\vv_r$ defines a bracket generating distribution of step 2 over $N_r$ by left-translation, and the translations of the inner product $\la \cdot \,, \cdot \ra_{\vv_{r}}$ makes $N_r$ into a sub-Riemannian manifold. 

Recall that for a nilpotent Lie group, the exponential map is a diffeomorphism, see for example~\cite{CG}. Therefore we can identify $N_r$ with $\vv_r\oplus\z_r$. We will use this identification through this paper. Under this identification, we denote by $V_{r}$ and $Z_{r}$  the image of $\vv_r$ and $\z_r$ respectively.

\subsection{Sub-Riemannian geodesics on $N_r$}\label{ssec:geod}

From now on, we write a horizontal sub-Riemannian geodesic $c \colon [0,1] \to N_{r}$ by $c(t)=(x(t),z(t))$, where $x(t)$ is in $V_{r}$ and $z(t)=(z^1(t),\dotsc,z^n(t))$ is in $Z_{r}$. 

From~\cite[Theorem 2]{GKM}, we have the following formulas:
\begin{align}
x(t)&=\frac{\sin(t\vert \theta \vert)}{\vert \theta \vert}\,\dot x(0)+\frac{(1-\cos(t\vert \theta \vert))}{\vert \theta \vert^2}\Omega \dot x(0),\label{subgeodgeneralx}
\end{align}
\begin{align}
\dot z^{k}(t)&=\frac{1}{2}\dot x(0)^T \Bigg[\frac{C^{k}}{\vert \theta \vert} \cos(t\vert \theta \vert) \sin(t\vert \theta \vert)+\frac{C^k \Omega}{\vert \theta \vert^2} \cos(t\vert \theta \vert) (1-\cos(t\vert \theta \vert))\\ 
\nonumber &+\frac{\Omega^TC^k}{\vert \theta \vert^2}\sin^2(t\vert \theta \vert)+\frac{\Omega^T C^k\Omega}{\vert \theta \vert^3}\sin(t\vert \theta \vert) (1-\cos(t\vert \theta \vert)) \Bigg]\dot x(0),\label{subgeodgeneralt}
\end{align}
where $\theta=(\theta_1, \dotso, \theta_n)\not=(0, \dotso,0)$ is a vector of parameters coming from the Hamiltonian formulation, $\Omega=\sum_{k=1}^n{C^k \theta_k}$, $\theta=(\theta_1, \dotso, \theta_n)\not=(0, \dotso,0)$ and $\vert \theta \vert={\left (\sum_{k=1}^n{\theta_k^2} \right )}^{1/2}$.

For $\theta=(0, \dotso,0)$ the sub-Riemannian geodesics are straight lines, i.e., the geodesic starting at the point $(0,0)$ and reaching the point $(x,0)$ at time $t=1$ is given by $c(t)=(tx,0)$.

\begin{prop}\label{prop:vert}
The vertical part $z(t)$ of a horizontal geodesic $c(t)$ for the $H$-type group $N_{r}$, corresponding to $\theta=(\theta_1, \dotso, \theta_n)\not=(0, \dotso,0)$, is given by
\begin{equation}\label{eq:vert}
z(t)=\frac{  \vert \dot x(0) \vert^2}{2\vert\theta \vert^2}\left(t-\frac{\sin(t \vert\theta \vert)}{\vert\theta \vert} \right)\theta.
\end{equation}
\end{prop}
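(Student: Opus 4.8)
The plan is to integrate the component formula~\eqref{subgeodgeneralt} directly, after first reducing the matrix inside the brackets using the algebraic relations satisfied by the structure matrices. I would begin by recording the two facts that make everything collapse. Since each $C^k$ lies in $\mathfrak{so}(m)$, the matrix $\Omega=\sum_{k=1}^n C^k\theta_k$ is skew-symmetric, i.e. $\Omega^T=-\Omega$. Using $(C^k)^2=-\Id_{\vv_r}$ together with the anticommutation $C^kC^p=-C^pC^k$ for $k\neq p$, a short computation gives the key identity
\[
C^k\Omega+\Omega C^k=-2\theta_k\,\Id_{\vv_r},\qquad k=1,\dotsc,n,
\]
since the off-diagonal contributions $\sum_{p\neq k}C^kC^p\theta_p$ cancel against $\sum_{p\neq k}C^pC^k\theta_p$, leaving only the diagonal terms $2(C^k)^2\theta_k=-2\theta_k\Id_{\vv_r}$.

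The decisive observation is that $\dot x(0)^T A\,\dot x(0)$ depends only on the symmetric part of $A$, because $\dot x(0)^T A\,\dot x(0)=\dot x(0)^T A^T\dot x(0)$ forces the skew part to vanish. I would therefore examine the symmetry type of each of the four matrices in~\eqref{subgeodgeneralt}. The term $C^k$ is skew, so it drops out. For the last term, $\Omega^T C^k\Omega=-\Omega C^k\Omega$, and since $(\Omega C^k\Omega)^T=\Omega^T(C^k)^T\Omega^T=-\Omega C^k\Omega$, this matrix is again skew and also drops out. The two middle terms survive: from the identity above, the symmetric part of $C^k\Omega$ is $\tfrac12(C^k\Omega+\Omega C^k)=-\theta_k\Id_{\vv_r}$, while $\Omega^T C^k=-\Omega C^k$ has symmetric part $-\tfrac12(\Omega C^k+C^k\Omega)=+\theta_k\Id_{\vv_r}$.

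Substituting these symmetric parts into~\eqref{subgeodgeneralt} and writing $u=t|\theta|$, the bracket contracts to
\[
\dot z^k(t)=\frac{\theta_k\,|\dot x(0)|^2}{2|\theta|^2}\Bigl[-\cos u\,(1-\cos u)+\sin^2 u\Bigr]
=\frac{\theta_k\,|\dot x(0)|^2}{2|\theta|^2}\,(1-\cos u),
\]
where the trigonometric simplification $-\cos u(1-\cos u)+\sin^2 u=1-\cos u$ is used. Integrating in $t$ with the initial condition $z^k(0)=0$ yields
\[
z^k(t)=\frac{\theta_k\,|\dot x(0)|^2}{2|\theta|^2}\Bigl(t-\frac{\sin(t|\theta|)}{|\theta|}\Bigr),
\]
which is precisely the $k$-th component of~\eqref{eq:vert}. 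Assembling the components into a vector recovers the stated formula.

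I expect the main obstacle to be purely algebraic rather than analytic: the whole argument hinges on spotting the anticommutator identity $C^k\Omega+\Omega C^k=-2\theta_k\Id_{\vv_r}$ and on correctly tracking the symmetry type of each of the four matrices — in particular recognizing that both $C^k$ and $\Omega^T C^k\Omega$ are skew and hence contribute nothing to the quadratic form. Once these reductions are in place, the remaining trigonometric simplification and the integration are routine.
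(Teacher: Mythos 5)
Your proof is correct and follows essentially the same route as the paper: both arguments evaluate the four quadratic forms $\dot x(0)^T M\,\dot x(0)$ in~\eqref{subgeodgeneralt} by exploiting skew-symmetry (so that only the $\pm\theta_k\Id_{\vv_r}$ contributions from $C^k\Omega$ and $\Omega^TC^k$ survive), then apply the same trigonometric simplification and integrate. Your packaging via the anticommutator identity $C^k\Omega+\Omega C^k=-2\theta_k\Id_{\vv_r}$ and symmetric parts is a slightly cleaner presentation of the same computation the paper carries out by expanding the matrix products term by term.
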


\begin{proof}
First we note that given a skew-symmetric matrix $A \in \mathfrak{so}(m)$, i.e. $A^T=-A$, then $v^TAv=0$ for any vector $v \in \mathbb{R}^m$ as
$$\mathbb{R} \ni v^TAv=(v^TAv)^T=v^TA^Tv=-v^TAv.$$
Now we calculate the matrices $C^k \Omega$, $\Omega^TC^k$ and $\Omega^T C^k\Omega$.
\begin{align*}
C^k \Omega&= C^k\sum_{l=1}^n{C^l \theta_l}=\sum_{l=1}^n{C^kC^l \theta_l}=- \theta_k\Id_{\vv_r}+\sum_{l\not=k}^n{C^kC^l \theta_l}, \\
\Omega^TC^k&=-\Omega C^k=-\sum_{l=1}^n{C^l C^k \theta_l}=\theta_k \Id_{\vv_r}-\sum_{l\not=k}^n{C^lC^k \theta_l} \\
&=\theta_k \Id_{\vv_r}+\sum_{l\not=k}^n{C^kC^l \theta_l}
\end{align*}
\begin{align*}
\Omega^T C^k\Omega&= \left(\theta_k \Id_{\vv_r}+\sum_{l\not=k}^n{C^kC^l \theta_l}\right) \Omega=\theta_k \Omega+\sum_{l\not=k}^n{\sum_{p=1}^n{C^kC^lC^p \theta_l \theta_p}} \\
&= \theta_k \Omega+\sum_{l\not=k}^n{\sum_{p\not=k}^n{C^kC^lC^p \theta_l \theta_p}}+\sum_{l\not=k}^n{C^kC^lC^k \theta_l \theta_p} \\
&=\theta_k \Omega+\sum_{l\not=k}^n{\sum_{p\not=k}^n{C^kC^lC^p \theta_l \theta_p}}+\sum_{l\not=k}^n{C^l \theta_l \theta_p} \\
&=\theta_k \Omega+\sum_{l\not=k}^n{-C^k \theta_l^2}+\sum_{l\not=k}^n{C^l \theta_l \theta_p},
\end{align*}
where the last equation is obtained as $C^lC^p=-C^pC^l$ for $p\not=l$. As $\Omega$, $C^p$, $C^pC^l$ are skew-symmetric for any $p\not=l$, we obtain that
\begin{align*}
\dot x(0)^TC^k \dot x(0)&=0, \\
\dot x(0)^TC^k \Omega \dot x(0)&=- \theta_k |\dot x(0)|^2+\sum_{l\not=k}^n{\dot x(0)^TC^kC^l\dot x(0) \theta_l}=- \theta_k |\dot x(0)|^2, \\
\dot x(0)^T\Omega^TC^k \dot x(0)&=\theta_k  |\dot x(0)|^2+\sum_{l\not=k}^n{\dot x(0)^TC^kC^l \dot x(0) \theta_l} =\theta_k  |\dot x(0)|^2\\
\end{align*}
\begin{align*}
\dot x(0)^T\Omega^T C^k\Omega \dot x(0)&=\theta_k \dot x(0)^T\Omega \dot x(0)+\sum_{l\not=k}^n{-\dot x(0)^TC^k \dot x(0) \theta_l^2}\\
&+\sum_{l\not=k}^n{\dot x(0)^TC^l \dot x(0) \theta_l \theta_p}=0.
\end{align*}
It follows that for any $k=1, \dotso,n$
\begin{align*}
\dot z^k(t)&= \theta_k |\dot x(0)|^2 \frac{-\cos(t\vert \theta \vert) (1-\cos(t\vert \theta \vert))+\sin^2(t\vert \theta \vert)}{{2\vert \theta \vert^2}} \\
&=\theta_k |\dot x(0)|^2\frac{1-\cos(t\vert \theta \vert)}{{2\vert \theta \vert^2}}, \\
z^k(t)&=\frac{\theta_k |\dot x(0)|^2}{2\vert\theta \vert^2}\left(t-\frac{\sin(t \vert\theta \vert)}{\vert\theta \vert} \right).\qedhere
\end{align*}
\end{proof}

This simplification will allow us to determine concretely the points in any $H$-type group where minimizing sub-Riemannian geodesics starting from the origin stop being unique.


\section{Sub-Riemannian cut locus of $H$-type groups}\label{section:sRcutlocus}


Let $(Q,\mathcal H,g_{\mathcal H})$ be a sub-Riemannian manifold. The (sub-Riemannian) cut locus of a point $q_0 \in Q$ is the set 
\begin{eqnarray*}
K_{q_0}=& \Big\{q \in Q \vert\  \text{there exist}\   T>0, \text{ and minimizing horizontal }
\\
&\text{ geodesics }\gamma_1,\gamma_2\colon [0,T] \to Q,\ \gamma _{1}\neq \gamma _{2},  \text{ such that}\
\\
&   \gamma _{1}(0)= \gamma _{2}(0)=q_0\  \text{and}\  \gamma _{1}(T)= \gamma _{2}(T) =q\Big\} .
\end{eqnarray*}
In this section, we give a precise description of the sub-Riemannian cut locus of curves starting from the identity $(0,0)$ in the $H$-type groups introduced previously. More precisely, we want to prove the following

\begin{theorem}\label{th:main}
The cut locus $K_{(0,0)}$ of the $H$-type group $N_r$ is given by the points of the form $(0,z)$.
\end{theorem}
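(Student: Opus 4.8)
The plan is to parametrise every normal geodesic emanating from $(0,0)$ by the pair $(\dot x(0),\theta)$ appearing in~\eqref{subgeodgeneralx} and Proposition~\ref{prop:vert}, evaluate the endpoint map at $t=1$, and then decide, for each target $(x,z)$, how many of these geodesics reach it with minimal length. Writing $\phi=\vert\theta\vert$ and $J=\Omega/\vert\theta\vert$, the relation $\Omega^2=-\vert\theta\vert^2\Id_{\vv_r}$ (a direct consequence of $C^kC^p=-C^pC^k$ and $(C^k)^2=-\Id_{\vv_r}$) shows that $J$ is a complex structure, $J^2=-\Id_{\vv_r}$. Hence~\eqref{subgeodgeneralx} reads $x(1)=M\dot x(0)$ with $M=\frac{\sin\phi}{\phi}\Id_{\vv_r}+\frac{1-\cos\phi}{\phi}J$, and since $J$ is orthogonal one gets $M\bigl(\frac{\sin\phi}{\phi}\Id_{\vv_r}-\frac{1-\cos\phi}{\phi}J\bigr)=\frac{2(1-\cos\phi)}{\phi^2}\Id_{\vv_r}$, so $M$ is invertible exactly when $\phi\notin 2\pi\ent$. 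By Proposition~\ref{prop:vert} the vector $z(1)$ is a positive multiple of $\theta$, so for $z\neq 0$ the direction $\theta/\vert\theta\vert=z/\vert z\vert$ is forced; solving $x=M\dot x(0)$ and inserting $\vert\dot x(0)\vert^2=\vert x\vert^2\phi^2/(2(1-\cos\phi))$ into~\eqref{eq:vert} yields the scalar equation $\vert z\vert/\vert x\vert^2=\mu(\phi)$, where $\mu(\phi)=\frac{\phi-\sin\phi}{4(1-\cos\phi)}$, together with $\vert\dot x(0)\vert=\vert x\vert\,\phi/(2\vert\sin(\phi/2)\vert)$. Thus the geodesics from the origin to $(x,z)$ are in bijection with the solutions $\phi>0$, $\phi\notin 2\pi\ent$, of $\mu(\phi)=\vert z\vert/\vert x\vert^2$.

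Next I would split $N_r$ into the three announced sets. For $x\neq 0$ and $z\neq 0$, a short sign analysis of $\mu'$ (its numerator factors as $2\sin(\phi/2)\bigl(2\sin(\phi/2)-\phi\cos(\phi/2)\bigr)$, which is positive on $(0,2\pi)$) shows that $\mu$ is a strictly increasing bijection from $(0,2\pi)$ onto $(0,\infty)$; hence there is exactly one $\phi_1\in(0,2\pi)$ with $\mu(\phi_1)=\vert z\vert/\vert x\vert^2$, giving a single geodesic $\gamma_1$, while every other solution satisfies $\phi>2\pi$. For $z=0$ and $x\neq 0$, \eqref{eq:vert} forces $\theta=0$, so the only geodesic is the straight line $c(t)=(tx,0)$, which is minimising since the length of any horizontal curve equals the Euclidean length of its projection to $V_r$. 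Finally, for a target $(0,z)$ with $z\neq 0$ the condition $x(1)=0$ forces $M$ to be singular, i.e.\ $\phi\in 2\pi\ent$, whence $M=0$; then~\eqref{eq:vert} gives $z=\frac{\vert\dot x(0)\vert^2}{4\pi k}\,z/\vert z\vert$ for $\phi=2\pi k$, so that $\vert\dot x(0)\vert=2\sqrt{\pi k\vert z\vert}$ while the direction of $\dot x(0)$ remains free.

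To conclude I would invoke minimality. Since $N_r$ is complete and the $H$-type distribution is fat (for $v\neq 0$ the map $w\mapsto[v,w]$ is onto $\z_r$, because $\vert J_Zv\vert=\vert Z\vert\,\vert v\vert$), minimisers exist and are never abnormal, so every minimiser lies in the family above. On the center, the shortest geodesics to $(0,z)$ are those with $k=1$, i.e.\ $\phi=2\pi$, and these form the whole sphere $\{\dot x(0):\vert\dot x(0)\vert=2\sqrt{\pi\vert z\vert}\}$ of equal-length geodesics; as $\dim\vv_r$ is even and at least $2$, this sphere is infinite, so $(0,z)\in K_{(0,0)}$. For $x\neq 0$, any reaching geodesic with $\phi>2\pi$ passes, at the interior time $t^*=2\pi/\phi<1$, through the point $c(t^*)=(0,z(t^*))$ with $z(t^*)\neq 0$; by the central case this is a cut point, and since a minimising geodesic has no interior cut point, such a geodesic is not minimising. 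Hence the unique minimiser to $(x,z)$ is $\gamma_1$, and $(x,z)\notin K_{(0,0)}$. Combining the three cases yields $K_{(0,0)}=\{(0,z):z\neq 0\}$, the center punctured at the origin, which is the asserted statement.

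The hardest points are the two minimality facts used last: the absence of abnormal minimisers, which I dispose of by fatness of the distribution, and above all the passage-through-a-cut-point argument, which is what lets me deduce non-minimality for $\phi>2\pi$ \emph{without} the delicate direct comparison of the lengths $\vert x\vert\phi/(2\vert\sin(\phi/2)\vert)$ across the several solutions of $\mu(\phi)=\vert z\vert/\vert x\vert^2$. I would also be careful that the strict monotonicity of $\mu$ on $(0,2\pi)$, elementary as it is, is exactly what guarantees the \emph{uniqueness} (not merely the existence) of $\phi_1$, and hence of the minimiser in the non-central case.
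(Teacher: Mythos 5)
Your proposal is correct, and it reaches the theorem by a route that differs from the paper's precisely where the argument is delicate. The decomposition into the three cases $(0,z)$, $(x,0)$ and $(x,z)$ with $x\neq0\neq z$ is the same, as is the reduction of the third case to the scalar equation $|z|/|x|^2=\mu(\phi)$ (your $\mu(\phi)=\frac{\phi-\sin\phi}{4(1-\cos\phi)}$ is the paper's $\frac14\mu(|\theta|/2)$ after the half-angle substitution), and your monotonicity analysis of $\mu$ on $(0,2\pi)$ actually proves what the paper only reads off from Figure~\ref{mu}. The genuine divergence is in discarding the solutions with $|\theta|>2\pi$: the paper computes the lengths explicitly as $l_k^2=\nu(|\theta|_k)(|x|^2+4|z|)$ and compares the values of $\nu$ on either side of $2\pi$ (again leaning on a figure), whereas you observe that a geodesic with $\phi>2\pi$ returns to the center at the interior time $t^*=2\pi/\phi<1$, where by the first case it meets a cut point, and no minimizer can be extended past a cut point. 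Your argument is cleaner and figure-free, but it imports two facts the paper never needs: that every minimizer in $N_r$ is a normal geodesic (which you correctly derive from fatness, since $|J_Zv|=|Z|\,|v|$ makes $w\mapsto[v,w]$ surjective onto $\z_r$), and the ``no minimizing extension past a cut point'' lemma, which in the sub-Riemannian setting itself rests on normality plus uniqueness of normal geodesics with prescribed covector at a point; for a self-contained proof you should at least sketch the concatenation argument (replace $c|_{[0,t^*]}$ by the equal-length mirror geodesic with initial vector $-\dot x(0)$ and shorten the resulting broken minimizer). The paper's length comparison is more elementary in principle but, as written, delegates the key inequalities $\nu(|\theta|_1)<\pi<\nu(|\theta|_k)$ to pictures. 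A final small point in your favor: your conclusion $K_{(0,0)}=\{(0,z):z\neq0\}$ correctly excludes the origin itself, which the statement of Theorem~\ref{th:main} glosses over.
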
  

This result will be achieved in three steps: points of the form $(0,z)$ are in $K_{(0,0)}$; points of the form $(x,z)$, with $x\neq0$ and $z\neq0$, are not in $K_{(0,0)}$; and neither are the points of the form $(x,0)$.

\subsection{The vertical space is contained in the cut locus}

As a first step, we show that the points $(0,z)\in N_r$ are in the cut locus. The geodesics connecting the origin $(0,0)$ and $(0,z)$ and their length are given by the following Theorem which generalizes~\cite[Theorem 6.3]{CCM} to arbitrary $H$-type groups.

\begin{theorem}
For each natural number $k\in{\mathbb N}$, there exists a sub-Riemannian geodesic $c_k(t)=(x_k(t),z_k(t))$ in $N_r$ joining the origin with the point $(0,z)$. These curves have lengths $l_1,l_2, \dotso$, where $l^2_k=4 k \pi  \vert z \vert$, $k \in \mathbb{N}$, and their equations are
\begin{eqnarray*}
x_k(t)=4 \frac{\sin^2(k \pi t)}{ \vert \dot x(0) \vert^2 } \mathcal{Z}\dot{x}(0)  + \frac{\sin (2 k \pi  t)}{2 k \pi} \dot{x}(0), \quad k \in \mathbb{N},
\end{eqnarray*}
where $\mathcal Z = \sum_{r=1}^n{z^rC^r }$ and 
\begin{eqnarray*}
z_k(t)= \left (t-\frac{\sin(2 \pi k t)}{2 \pi k} \right ) z, \quad k \in \mathbb{N}.
\end{eqnarray*}
\end{theorem}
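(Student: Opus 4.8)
The plan is to start from the explicit geodesic equations~\eqref{subgeodgeneralx} and Proposition~\ref{prop:vert}, and to impose the two boundary conditions $x(1)=0$ and $z(1)=z$ (with $z\neq 0$), treating the parameter vector $\theta$ and the direction of $\dot x(0)$ as unknowns to be determined. First I would record two facts about the skew-symmetric matrix $\Omega=\sum_{k}C^k\theta_k$. Since $C^kC^l+C^lC^k=-2\delta_{kl}\Id_{\vv_r}$, a direct expansion gives $\Omega^2=-\vert\theta\vert^2\Id_{\vv_r}$, so $\Omega$ is invertible whenever $\theta\neq 0$; and because $\Omega^T=-\Omega$ we have $\langle\Omega w,w\rangle=0$ for every $w$. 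Differentiating~\eqref{subgeodgeneralx} yields $\dot x(t)=\cos(t\vert\theta\vert)\dot x(0)+\frac{\sin(t\vert\theta\vert)}{\vert\theta\vert}\Omega\dot x(0)$, and using these two identities (the cross term vanishes and $\vert\Omega\dot x(0)\vert^2=\vert\theta\vert^2\vert\dot x(0)\vert^2$) shows that $\vert\dot x(t)\vert=\vert\dot x(0)\vert$ is constant. Hence the length is simply $L(c_k)=\vert\dot x(0)\vert$, which reduces the length claim to computing $\vert\dot x(0)\vert^2$.

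The condition $x(1)=0$ reads $\frac{\sin\vert\theta\vert}{\vert\theta\vert}\dot x(0)+\frac{1-\cos\vert\theta\vert}{\vert\theta\vert^2}\Omega\dot x(0)=0$. For $\dot x(0)\neq 0$ the vectors $\dot x(0)$ and $\Omega\dot x(0)$ are both nonzero (invertibility of $\Omega$) and mutually orthogonal (skew-symmetry), hence linearly independent; therefore both coefficients must vanish, forcing $\sin\vert\theta\vert=0$ \emph{together with} $\cos\vert\theta\vert=1$, i.e. $\vert\theta\vert=2k\pi$ for some $k\in\mathbb N$. I expect this to be the crux of the argument: it is precisely the orthogonality of $\dot x(0)$ and $\Omega\dot x(0)$ that rules out the odd multiples of $\pi$ and selects the discrete family indexed by $k$. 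Substituting $\sin\vert\theta\vert=0$ into~\eqref{eq:vert} gives $z(1)=\frac{\vert\dot x(0)\vert^2}{2\vert\theta\vert^2}\theta$, so the requirement $z(1)=z$ forces $\theta$ to be the positive multiple $\theta=\frac{2k\pi}{\vert z\vert}z$, and matching norms then yields $\vert\dot x(0)\vert^2=4k\pi\vert z\vert$, which is exactly $l_k^2$.

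Finally I would substitute $\vert\theta\vert=2k\pi$, $\theta=\frac{2k\pi}{\vert z\vert}z$ and $\vert\dot x(0)\vert^2=4k\pi\vert z\vert$ back into~\eqref{subgeodgeneralx} and~\eqref{eq:vert}. Writing $\Omega=\frac{2k\pi}{\vert z\vert}\mathcal Z$ with $\mathcal Z=\sum_r z^rC^r$, applying the half-angle identity $1-\cos(2k\pi t)=2\sin^2(k\pi t)$, and simplifying the scalar coefficients produces the stated expressions for $x_k(t)$ and $z_k(t)$; in particular the coefficient $\frac{\sin^2(k\pi t)}{k\pi\vert z\vert}$ of $\mathcal Z\dot x(0)$ rewrites as $\frac{4\sin^2(k\pi t)}{\vert\dot x(0)\vert^2}$ upon inserting $\vert\dot x(0)\vert^2=4k\pi\vert z\vert$. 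Since the direction of $\dot x(0)$ remains entirely free subject only to its fixed norm, each $k$ in fact yields a whole sphere of such geodesics, which is the feature that will later place $(0,z)$ in the cut locus. The remaining work is purely bookkeeping of constants, together with checking that the curves so obtained genuinely satisfy the original boundary conditions, which closes the existence statement.
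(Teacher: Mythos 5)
Your proposal is correct and follows essentially the same route as the paper: impose $x(1)=0$ on the explicit formula~\eqref{subgeodgeneralx} to force $\vert\theta\vert=2k\pi$ (the paper gets this by computing $\vert x(1)\vert^2=\frac{4\vert\dot x(0)\vert^2}{\vert\theta\vert^2}\sin^2(\vert\theta\vert/2)$, which encodes the same orthogonality of $\dot x(0)$ and $\Omega\dot x(0)$ that you invoke via linear independence), then use $z(1)=z$ to pin down $\theta$ and $\vert\dot x(0)\vert^2=4k\pi\vert z\vert$, and substitute back. Your explicit verification that $\vert\dot x(t)\vert$ is constant is a slightly more detailed justification of the length formula than the paper gives, but the argument is the same.
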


\begin{proof}
We follow a similar scheme as in~\cite{CCM}. Evaluating equation~\eqref{subgeodgeneralx} at $t=1$, and after some simple computations, we see that
\[
0=|x(1)|^2=\frac{4|\dot x(0)|^2}{|\theta|^2}\sin^2\left(\frac{|\theta|}2\right).
\]
Since we can assume that $|\dot x(0)|\neq0$, it follows that $|\theta|=2k\pi$, for $k\in{\mathbb N}$. This, in turn, implies that
\[
z=z(1)=\frac{|\dot x(0)|^2}{8k^2\pi^2}\theta,
\]
thus $|\dot x(0)|^2=4k\pi|z|$. We immediately obtain the geodesic equations.

The length of the geodesics follows easily, since
\[
l_k^2=l(c_k)^2=\left(\int_0^1\sqrt{|\dot x(t)|}dt\right)^2=|\dot x(0)|^2=4k\pi|z|.\qedhere
\]
\end{proof}

Suppose $c(t)$ is the minimizing geodesic between the origin and $(0,z)$ with length $4 \pi \vert z \vert$ and with initial vector $\dot{x}(0)\not=0$. We define the geodesic $\tilde c(t)=(\tilde x(t),\tilde z(t))$ with initial vector $-\dot{x}(0)$ by
\begin{eqnarray*}
\tilde x(t)&=& -4\frac{\sin^2(\pi m t)}{ \vert \dot x(0) \vert^2 } \mathcal{Z}\dot{x}(0)  -\frac{\sin (2 \pi t)}{2 \pi }  \dot{x}(0),  \\
\tilde z(t)&=& \left (t-\frac{\sin(2 \pi  t)}{2 \pi }\right )z.
\end{eqnarray*}
This geodesic is minimizing between the origin and $(0,z)$ as it has length $4 \pi \vert z \vert$ and clearly $c \not = \tilde c$. It follows that $(0,z)$ is an element of the cut locus of the origin.

\subsection{If $x \not =0$ and $z\not=0$, then $(x,z)$ is not in the cut locus}

Theorem~\ref{th:main} will follow after proving that points not contained in the vertical space are not elements in the cut locus. We start the analysis by proving the following extension of~\cite[Theorem 6.5]{CCM}.

\begin{theorem}\label{geo}
Given a point $(x,z)\in N_r$ with $x\not=0$, $z\not=0$, there are finitely many sub-Riemannian geodesics joining the origin $(0,0)$ with $(x,z)$. Let $\vert \theta \vert_1, \dotso , \vert \theta \vert_N$ be solutions of the equation
\begin{eqnarray*}
\frac{4 \vert z \vert}{\vert x \vert^2}=\mu(\vert \theta \vert/2 ),
\end{eqnarray*}
where $\mu(\alpha)= \frac{\alpha}{\sin^2(\alpha)}-\cot(\alpha)$. Then the equation of the geodesic $c_k(t)=(x_k(t),z_k(t))$, $t \in [0,1]$, corresponding to $|\theta|_k$, is
\begin{align*}
x_k(t) &= \sin\left(\frac{t\vert \theta \vert_k}2\right) \cos\left(\frac{t\vert \theta \vert_k}2\right) \left ( \frac{8 \sin^2\left(\tfrac{\vert \theta \vert_k}2\right)\left(\tan\left(\frac{t\vert \theta \vert_k}2\right)\cot\left(\frac{\vert \theta \vert_k}2\right)-1\right)}{|x|^2(\vert \theta \vert_k-\sin(\vert \theta \vert_k))} {\mathcal Z} x\right.  \\
&+ \left.\left(\tan\left(\frac{t\vert \theta \vert_k}2\right)+ \cot\left(\frac{\vert \theta \vert_k}2\right)\right)x \right) , \\\
z_k(t)&=\frac{t|\theta|_k-\sin(t|\theta|_k)}{|\theta|_k-\sin(|\theta|_k)}\;z
\end{align*}
with $\mathcal Z = \sum_{r=1}^n{z^rC^r }$ and $k=1,2,\dotso ,N$. The lengths of these geodesics are $l_k^2= \nu (\vert \theta \vert_k)(\vert x \vert ^2 + 4 \vert z \vert)$, where 
$$ \nu(\alpha) = \frac{\alpha^2}{2(1+\alpha-\cos(\alpha)-\sin(\alpha))}.$$
\end{theorem}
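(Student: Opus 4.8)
The plan is to impose the endpoint conditions $x(1)=x$ and $z(1)=z$ on the general geodesic equations and to solve for the Hamiltonian parameter $\theta$ and the initial velocity $\dot x(0)$. I would begin with the vertical condition. Evaluating~\eqref{eq:vert} at $t=1$ gives
\[
z=\frac{\vert\dot x(0)\vert^2}{2\vert\theta\vert^2}\left(1-\frac{\sin\vert\theta\vert}{\vert\theta\vert}\right)\theta,
\]
and since $\sin s<s$ for $s>0$ the scalar factor is strictly positive; this forces $\theta$ to be the positive multiple $\vert\theta\vert\,z/\vert z\vert$ of $z$, so in particular $\Omega=\sum_k\theta_kC^k=\frac{\vert\theta\vert}{\vert z\vert}\mathcal Z$. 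Taking norms yields the first scalar relation $\vert z\vert=\frac{\vert\dot x(0)\vert^2}{2\vert\theta\vert^2}(\vert\theta\vert-\sin\vert\theta\vert)$.

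Next I would treat the horizontal condition. Writing $x(1)=M\dot x(0)$ with $M=\frac{\sin\vert\theta\vert}{\vert\theta\vert}\Id+\frac{1-\cos\vert\theta\vert}{\vert\theta\vert^2}\Omega$, I use that $\Omega$ is skew-symmetric with $\Omega^2=-\vert\theta\vert^2\Id$, a direct consequence of the Clifford relations $C^kC^p=-C^pC^k$ and $(C^k)^2=-\Id$. Then $M^TM=\frac{2(1-\cos\vert\theta\vert)}{\vert\theta\vert^2}\Id$, whence $\vert x\vert^2=\vert\dot x(0)\vert^2\frac{2(1-\cos\vert\theta\vert)}{\vert\theta\vert^2}$. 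Dividing the two scalar relations eliminates $\vert\dot x(0)\vert^2$ and gives
\[
\frac{4\vert z\vert}{\vert x\vert^2}=\frac{\vert\theta\vert-\sin\vert\theta\vert}{1-\cos\vert\theta\vert},
\]
which the half-angle identities $\sin\vert\theta\vert=2\sin(\tfrac{\vert\theta\vert}{2})\cos(\tfrac{\vert\theta\vert}{2})$ and $1-\cos\vert\theta\vert=2\sin^2(\tfrac{\vert\theta\vert}{2})$ turn into $\mu(\vert\theta\vert/2)$, exactly the stated equation determining the admissible values $\vert\theta\vert_1,\dots,\vert\theta\vert_N$.

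For the finiteness and uniqueness claim I would argue in two parts. First, $M^TM$ is a positive scalar multiple of the identity precisely when $1-\cos\vert\theta\vert\neq0$, and $\vert\theta\vert\in2\pi\mathbb N$ would force $x=0$ by the preceding theorem; hence for $x\neq0$ the operator $M$ is invertible and $\dot x(0)=M^{-1}x$ is uniquely determined by each admissible $\vert\theta\vert$, so there is exactly one geodesic per solution and it suffices to bound the number of solutions. Second, writing $\mu(\alpha)=\frac{\alpha-\tfrac12\sin2\alpha}{\sin^2\alpha}$ and noting that its numerator is nonnegative (its derivative is $2\sin^2\alpha\ge0$) while $\sin^2\alpha\le1$, I get $\mu(\alpha)\ge\alpha-\tfrac12\sin2\alpha\ge\alpha-\tfrac12$; thus every solution satisfies $\alpha\le\frac{4\vert z\vert}{\vert x\vert^2}+\tfrac12$. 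Since $\mu$ is real-analytic and non-constant on each interval $(k\pi,(k+1)\pi)$, it meets a fixed level only finitely often there, and only finitely many such intervals lie below the bound, giving finitely many solutions.

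Finally I would recover the explicit formulas by back-substitution. The vertical part is immediate: dividing~\eqref{eq:vert} by its value at $t=1$ gives $z_k(t)=\frac{t\vert\theta\vert_k-\sin(t\vert\theta\vert_k)}{\vert\theta\vert_k-\sin\vert\theta\vert_k}\,z$. For the horizontal part I substitute $\dot x(0)=M^{-1}x=\frac{\vert\theta\vert^2}{2(1-\cos\vert\theta\vert)}\bigl(\tfrac{\sin\vert\theta\vert}{\vert\theta\vert}x-\tfrac{1-\cos\vert\theta\vert}{\vert\theta\vert^2}\Omega x\bigr)$ together with $\Omega=\frac{\vert\theta\vert}{\vert z\vert}\mathcal Z$ into~\eqref{subgeodgeneralx}, and collect the coefficients of $x$ and $\mathcal Z x$ via half-angle identities to reach the stated $x_k(t)$. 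For the length, constant speed of horizontal geodesics gives $l_k^2=\vert\dot x(0)\vert^2$; combining the two scalar relations yields $\vert x\vert^2+4\vert z\vert=\vert\dot x(0)\vert^2\frac{2(1+\vert\theta\vert-\cos\vert\theta\vert-\sin\vert\theta\vert)}{\vert\theta\vert^2}$, and multiplying by $\nu(\vert\theta\vert_k)$ returns $\vert\dot x(0)\vert^2$, as required. The main obstacle I anticipate is the finiteness argument, namely controlling the level sets of $\mu$ uniformly across the intervals $(k\pi,(k+1)\pi)$, whereas the closed form for $x_k(t)$ is lengthy but routine trigonometric bookkeeping.
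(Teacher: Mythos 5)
Your proposal is correct and follows essentially the same route as the paper: evaluate the geodesic equations at $t=1$, deduce that $\theta$ is a positive multiple of $z$ and that $|\theta|$ satisfies $4|z|/|x|^2=\mu(|\theta|/2)$, invert the matrix $M=\frac{\sin|\theta|}{|\theta|}\Id_{\vv_r}+\frac{1-\cos|\theta|}{|\theta|^2}\Omega$ using $\Omega^2=-|\theta|^2\Id_{\vv_r}$, and back-substitute to get $x_k(t)$ and $z_k(t)$. The only notable differences are improvements in execution: you obtain the length from constant speed together with $M^TM=\frac{2(1-\cos|\theta|)}{|\theta|^2}\Id_{\vv_r}$ instead of the paper's explicit differentiation of $x_k(t)$, and you supply an actual finiteness argument for the solutions of the $\mu$-equation (a linear lower bound on $\mu$ plus analyticity on each interval between poles), whereas the paper simply labels the solutions $|\theta|_1,\dotsc,|\theta|_N$ and reads their distribution off the graph of $\mu$.
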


\begin{proof}
Putting $s=1$ into equations~\eqref{subgeodgeneralx} and~\eqref{eq:vert}, we see that
\begin{align}
|x|^2&=|x(1)|^2=\frac{4\sin^2(|\theta|/2)}{|\theta|^2}\,|\dot x(0)|^2,\label{eq:xnot0}\\
z&=z(1)=\frac{|x|^2}{8\sin^2(|\theta|/2)}\left(1-\frac{\sin(|\theta|)}{|\theta|}\right)\theta=\frac{|x|^2\mu(|\theta|/2)}{4|\theta|}\,\theta.\label{eq:znot0}
\end{align}
It follows that $|z|=\frac14|x|^2\mu(|\theta|/2)$. Let $\vert \theta \vert_1, \dotso , \vert \theta \vert_N$ be the solutions of this equation. We fix a solution $|\theta|_k$ and obtain by the use of equation~\eqref{eq:xnot0} in~\eqref{eq:znot0} that
\begin{equation}\label{eq:thetaandz}
\theta=\frac{2|\theta|_k^3\:z}{|\dot x(0)|^2(|\theta|_k-\sin(|\theta|_k))},
\end{equation}
and therefore we have that
\[
z_k(t)=\frac{t|\theta|_k-\sin(t|\theta|_k)}{|\theta|_k-\sin(|\theta|_k)}\;z.
\]

To find the expression for $x_k(t)$, let us first observe that
\[
\left(\frac{|\theta|_k}2\cot\left(\frac{|\theta|_k}2\right)\Id_{\vv_r}-\frac{\Omega}2\right)
\left(\frac{\sin(|\theta|_k)}{|\theta|_k}\Id_{\vv_r}+\frac{1-\cos(|\theta|_k)}{|\theta|_k^2}\,\Omega\right)=\Id_{\vv_r},
\]
which follows from simple trigonometric identities and the fact that choosing the corresponding covector $\theta$, we have that $\Omega^2=-|\theta|_k^2\Id_{\vv_r}$. Since equation~\eqref{subgeodgeneralx} can be written as
\[
x=\left(\frac{\sin(\vert \theta \vert_k)}{\vert \theta \vert_k}\,\Id_{\vv_r}+\frac{(1-\cos(\vert \theta \vert_k))}{\vert \theta \vert_k^2}\Omega\right) \dot x(0),
\]
then $\dot x(0)=\left(\frac{|\theta|_k}2\cot\left(\frac{|\theta|_k}2\right)\Id_{\vv_r}-\frac{\Omega}2\right)x$, and therefore
\begin{align*}
x_k(t)&=\left(\frac{\sin(t\vert \theta \vert_k)}{\vert \theta \vert_k}\,\Id_{\vv_r}+\frac{(1-\cos(t\vert \theta \vert_k))}{\vert \theta \vert_k^2}\Omega \right)\dot x(0)\\
&=\left(\frac{\sin(t\vert \theta \vert_k)}{\vert \theta \vert_k}\,\Id_{\vv_r}+\frac{(1-\cos(t\vert \theta \vert_k))}{\vert \theta \vert_k^2}\Omega \right)\left(\frac{|\theta|_k}2\cot\left(\frac{|\theta|_k}2\right)\Id_{\vv_r}-\frac{\Omega}2\right)x.
\end{align*}
The equation for $x_k(t)$ in the statement follows from a simple computation, using the formula above and equation~\eqref{eq:thetaandz}.

We calculate the length of our obtained geodesics. For a fixed solution $|\theta|_k$ we obtain 
\begin{align*}
\dot x_k(t)&= \frac{1}{2} \Big ( \Big ( -\cos(t |\theta|_k)+\cot \left ( \frac{|\theta|_k}{2} \right ) \sin(t |\theta|_k) \Big) \Omega x \\
&+ |\theta|_k \Big (\cos(t |\theta|_k) \cot \left ( \frac{|\theta|_k}{2} \right ) + \sin(t |\theta|_k) \Big )x \Big ),
\end{align*}
and therefore
\begin{align*}
\langle \dot x_k(t) \,, \dot x_k(t) \rangle&= \langle \Omega x \,, \Omega x \rangle \frac{1}{4} \Big ( -\cos(t |\theta|_k)+\cot \left ( \frac{|\theta|_k}{2} \right ) \sin(t |\theta|_k) \Big)^2 \\
&+ \langle x \,, x \rangle \frac{1}{4} |\theta|^2_k \Big (\cos(t |\theta|_k) \cot \left ( \frac{|\theta|_k}{2} \right ) + \sin(t |\theta|_k) \Big )^2 \\
&= \langle x \,, x \rangle \frac{1}{4} |\theta|^2_k \Big ( \Big(  -\cos(t |\theta|_k)+\cot \left ( \frac{|\theta|_k}{2} \right ) \sin(t |\theta|_k) \Big)^2 \\
&+ \Big (\cos(t |\theta|_k) \cot \left ( \frac{|\theta|_k}{2} \right ) + \sin(t |\theta|_k) \Big )^2 \Big) \\
&= \langle x \,, x \rangle \frac{|\theta|_k^2}{2-2\cos(|\theta|_k)}\,.
\end{align*} 
Hence $l_k^2= \nu (\vert \theta \vert_k)(\vert x \vert ^2 + 4 \vert z \vert)$, since $4 \vert z \vert=|x|^2\mu(|\theta|/2)$.
\end{proof}

Given a point $(x,z)$ with $x \not =0$, $z\not=0$. Then there exists $N$ solutions $\vert \theta \vert_1, \dotso , \vert \theta \vert_N$ of the equation \begin{eqnarray*}
\frac{4 \vert z \vert}{\vert x \vert^2}=\mu \left (\frac{\vert \theta \vert}{2} \right).
\end{eqnarray*} 
If $N=1$, then there does not exist a second minimizing geodesic. Hence $(x,z)$ is not in the cut locus. 
\\
For $N>1$, we have to examine the solutions $\vert \theta \vert_k$ in detail. Without loss of generality, we assume that $\vert \theta \vert_k < \vert \theta \vert_{k+1}$. We know that $\mu$ is an increasing diffeomorphism on the interval $(-2\pi \,, 2\pi)$ onto $\mathbb{R}$ such that $\vert \theta \vert_1<2\pi$ and $\vert \theta \vert_k>2\pi$ for all $1<k\leq N$, see Figure~\ref{mu}. In Figure~\ref{vi}, we see that $\nu(2\pi)=\pi$ and that $\nu(x)<\nu(y)$ for all $x \in [0,\pi)$, $y\in(\pi, \infty)$. Hence 
$$\nu(\vert \theta \vert_1)< \nu(\vert \theta \vert_k), \qquad \text{ for } 1<k\leq N.$$
This implies that the geodesics $c_k(t)=(x_k(t),z_k(t))$ cannot be minimizing for $1<k\leq N$. This implies that the only minimizing geodesic between the origin and $(x,z)$ with $x\not=0$, $z \not=0$ is given by $c_1(t)=(x_1(t),z_1(t))$, hence $(x,z)$ is not in the cut locus.
 \begin{figure}[h]\label{mu}
 \centering
 \includegraphics[scale=0.75]{./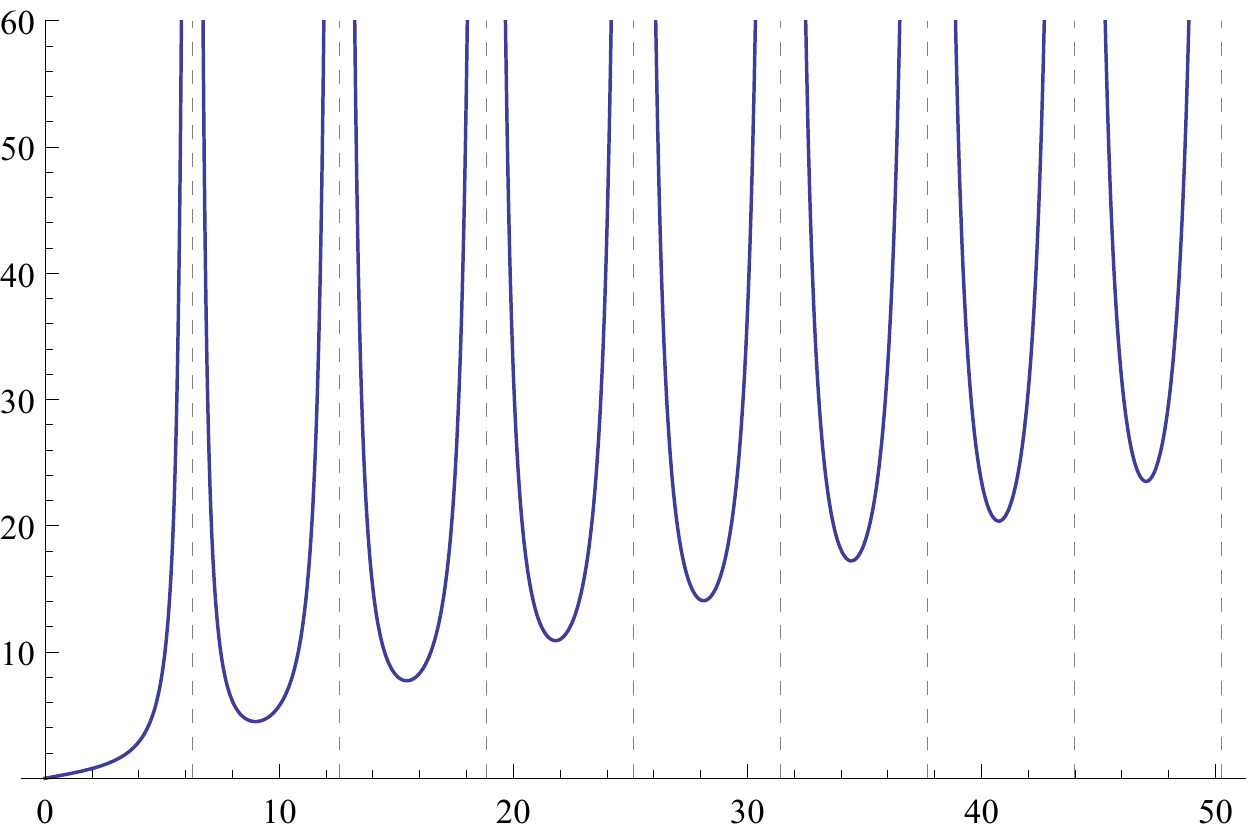}
 \caption{$\mu\left(\frac{\alpha}{2}\right)=\frac{\left ( \frac{\alpha}{2} \right)^2 }{\sin^2\left (\frac{\alpha}{2} \right)}-\cot\left(\frac{\alpha}{2}\right)$ on the interval $[0 \,, 16\pi ]$ with vertical lines at the points $2n\pi$, $n \in \mathbb{N}$.}
 \label{mu}
\end{figure}

\begin{figure}[h]\label{vi}
 \centering
 \includegraphics[scale=0.5]{./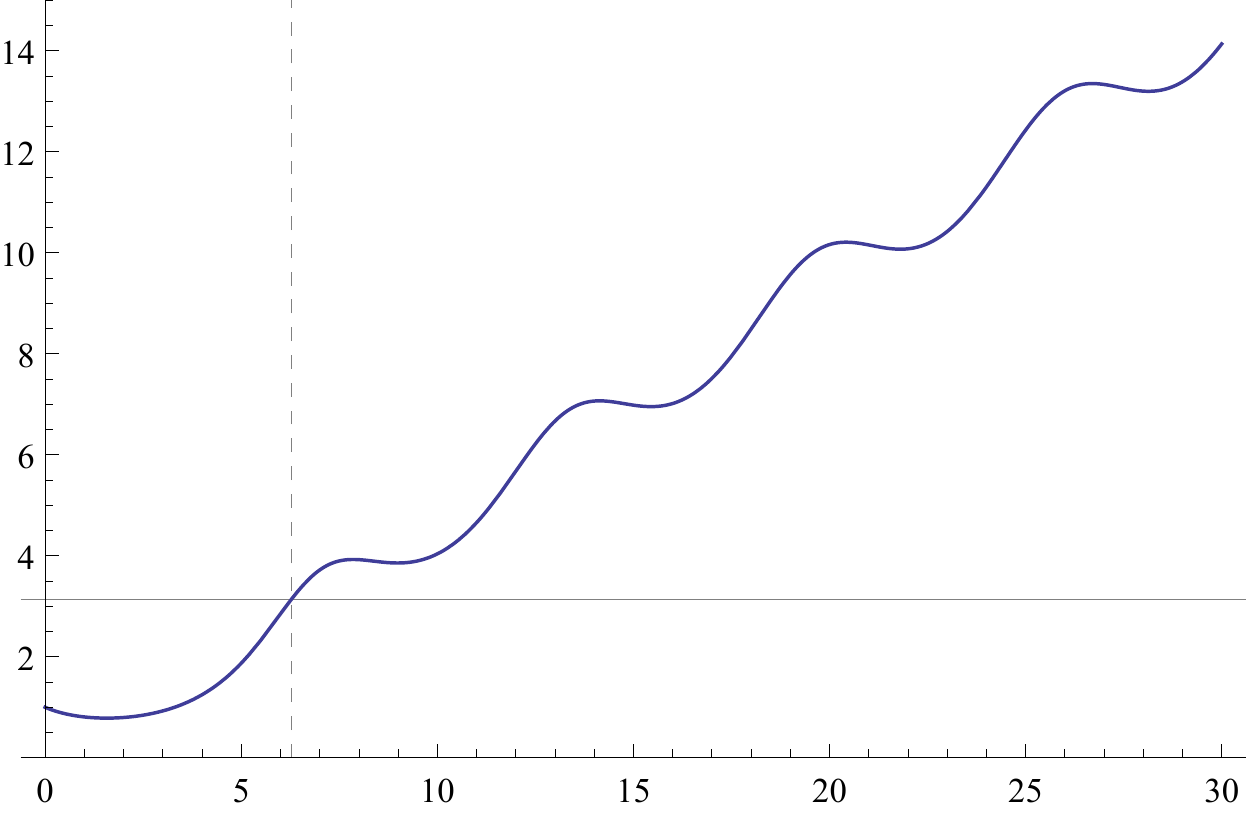}
 \caption{$\frac{\alpha^2 }{2(1+\alpha-\cos(\alpha)-\sin(\alpha))}$ on the interval $[0 \,, 30 ]$ with vertical line at the point $2\pi$ and horizontal line at the point $\pi$.}
 \label{vi}
\end{figure} 

\subsection{Points of the form $(x,0)$ are not in the cut locus}

To conclude the proof of Theorem~\ref{th:main}, we prove the following result.

\begin{theorem}\label{octx0}
A sub-Riemannian geodesic $c(t)$ in $N_r$ is horizontal with constant $z$-coordinate $z_0\in Z_r$ if and only if $c(t)=(at,z_0)$ for some vector $a \in V_r$ such that $|a|\not=0$.
\end{theorem}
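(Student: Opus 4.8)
The plan is to treat the two implications separately; the forward one is a direct verification, while the reverse one rests on Proposition~\ref{prop:vert}. Throughout I would use that a curve $c(t)=(x(t),z(t))$ is horizontal exactly when $\dot z^k(t)=\tfrac12\,x(t)^T C^k \dot x(t)$ for every $k$, which is the coordinate form of the requirement that $\dot c(t)$ lie in the left-translate of $\vv_r$; here the identity $B^k_{ij}=C^k_{ij}$ and the skew-symmetry of $C^k$ enter. Equivalently this reads $\dot z(t)=\tfrac12\,[x(t),\dot x(t)]$.

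For the implication $(\Leftarrow)$, let $c(t)=(at,z_0)$ with $a\in V_r$ and $|a|\neq0$. Then $\dot z^k(t)=\tfrac12\,(at)^T C^k a=\tfrac{t}{2}\,a^T C^k a=0$ because $C^k$ is skew-symmetric, so $c$ is horizontal and its $z$-coordinate is constantly equal to $z_0$. To see that $c$ is genuinely a geodesic I would observe that $c=L_{(0,z_0)}\circ\gamma$, where $\gamma(t)=(at,0)$ is the straight-line geodesic issuing from the origin (the case $\theta=0$) and $L_{(0,z_0)}$ denotes left translation; since the sub-Riemannian structure on $N_r$ is left-invariant, $c$ is a geodesic as well.

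For the implication $(\Rightarrow)$, suppose $c(t)=(x(t),z(t))$ is a geodesic issuing from the center with $z(t)\equiv z_0$, so that $c(0)=(0,z_0)$, i.e. $x(0)=0$ (this is the normalization consistent with the asserted form, and it holds automatically for geodesics emanating from the origin, the case relevant to Theorem~\ref{th:main}). Left translating by $(0,-z_0)$ produces the curve $\tilde c(t)=(x(t),z(t)-z_0)$, which is again a geodesic, now issuing from the origin, and whose vertical part vanishes identically. Hence $\tilde c$ is one of the geodesics described in Section~\ref{ssec:geod}, corresponding to some parameter $\theta$ (possibly zero). If $\theta\neq0$, Proposition~\ref{prop:vert} gives
\[
z(t)-z_0=\frac{|\dot x(0)|^2}{2|\theta|^2}\left(t-\frac{\sin(t|\theta|)}{|\theta|}\right)\theta,
\]
and differentiating produces $\tfrac{|\dot x(0)|^2}{2|\theta|^2}\bigl(1-\cos(t|\theta|)\bigr)\theta$, which cannot vanish identically unless $\dot x(0)=0$; but $\dot x(0)=0$ forces $x\equiv0$ by~\eqref{subgeodgeneralx}, contradicting $|a|\neq0$. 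Therefore $\theta=0$, so $\tilde c$ is a straight line $(at,0)$ with $a=\dot x(0)\neq0$, and translating back yields $c(t)=(at,z_0)$.

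The step I expect to be most delicate is the reduction to a geodesic based at the origin: left translation generally mixes the horizontal and vertical coordinates, so it is essential that the translating element $(0,-z_0)$ has vanishing horizontal part—equivalently, that $c$ starts on the center—which is precisely what preserves constancy of the vertical part and allows Proposition~\ref{prop:vert} to be invoked directly. The degenerate case $\dot x(0)=0$ must also be explicitly discarded, since the statement excludes it through the hypothesis $|a|\neq0$.
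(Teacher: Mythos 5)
Your proof is correct and its essential step is the same as the paper's: both force $\theta=0$ by observing that Proposition~\ref{prop:vert} makes $\dot z$ nonvanishing whenever $\theta\neq0$ and $\dot x(0)\neq0$, and then read off the straight line from the $\theta=0$ case of the geodesic equations. You additionally spell out the easy implication (horizontality of $(at,z_0)$ plus left-invariance) and the normalization to curves issuing from the center, both of which the paper leaves implicit.
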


\begin{proof}
Since $z(t)=z_0$ is constant, then $0=\dot z(t)$. If we assume $|\theta|\neq 0$, then we can apply Proposition~\ref{prop:vert}, to see that
\[
0=\dot z(t)=\frac{|\dot x(0)|^2(1-\cos(t\vert \theta \vert))}{{2\vert \theta \vert^2}}\theta.
\]
Since $|\dot x(0)|^2\neq 0$ and $(1-\cos(t\vert \theta \vert))\neq0$ for all $t\in[0,1]$, we obtain a contradiction. It follows that $\theta$ must vanish, and thus 
\[
x(t)=t\,\dot x(0),
\]
from the characterization of geodesics in Subsection~\ref{ssec:geod}. Setting $a=\dot x(0)$, the result is proved.
\end{proof}

It remains to show that there is no geodesic connecting the origin $(0,0)$ with $(x,0)$ with non-constant vertical component $z(t)$. Let assume that there exist such a geodesic which reaches $(x,0)$ at time $t_0=1$, then the non-constant part $z(t)$ is given by
\begin{eqnarray*}
z(t)=\frac{\vert \dot x (0) \vert^2}{4 \vert \theta \vert ^2}\left ( t - \frac{\sin(2t\vert \theta \vert )}{2\vert \theta \vert} \right ) \theta.
\end{eqnarray*}
It follows that $1=\frac{\sin(2\vert \theta \vert )}{2\vert \theta \vert}$ if and only if $\vert \theta \vert=0$, see Figure~\ref{sin}, which implies that $z(t)$ is constant. This is a contradiction to our assumption, hence there does not exist a geodesic connecting the origin $(0,0)$ with $(x,0)$ with non-constant vertical component $z(t)$. Hence the geodesic given in Theorem~\ref{octx0} is the unique geodesic connecting $(0,0)$ with $(x,0)$.

\begin{figure}[h]\label{sin}
 \centering
 \includegraphics[scale=0.75]{./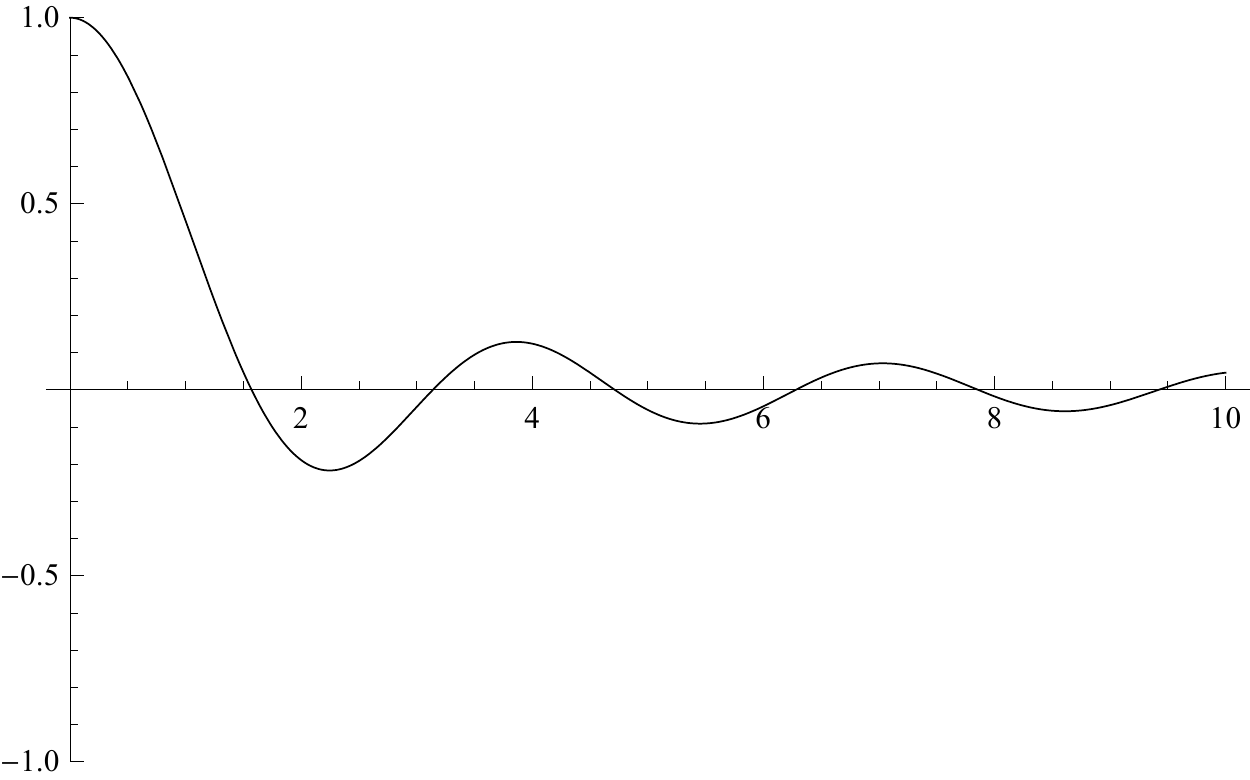}
 \caption{$\frac{\sin(2\alpha) }{2\alpha}$ on the interval $[0 \,, 10 ]$}
 \label{sin}
\end{figure}

\end{document}